\documentclass[12pt]{article}

\usepackage{amsmath,amsthm,amssymb}

\usepackage{hyperref}

\theoremstyle{plain}
\newtheorem{theorem}{Theorem}
\newtheorem{lemma}[theorem]{Lemma}
\newtheorem{corollary}[theorem]{Corollary}
\newtheorem{proposition}[theorem]{Proposition}

\theoremstyle{definition}
\newtheorem{definition}[theorem]{Definition}
\newtheorem{example}[theorem]{Example}

\theoremstyle{remark}

\newcommand{\N}{\mathbb{N}}
\newcommand{\Z}{\mathbb{Z}}
\newcommand{\Fact}{\mathrm{Fact}^+}
\newcommand{\Pref}{\mathrm{Pref}^+}
\newcommand{\Ret}{\mathcal{R}}
\newcommand{\PP}{\mathcal{P}}
\newcommand{\PPm}{\mathcal{P}_{\mathrm{m}}}
\newcommand{\PPs}{\mathcal{P}_{\mathrm{s}}}

\title{Factor Colorings of Linearly Recurrent Words}

\author{Ville Salo\\
\small Center for Mathematical Modeling\\[-0.8ex]
\small University of Chile\\[-0.8ex]
\small Santiago, Chile\\
\small\tt vsalo@dim.uchile.cl
\and
Ilkka T\"orm\"a\\
\small TUCS -- Turku Centre for Computer Science\\[-0.8ex]
\small University of Turku\\[-0.8ex]
\small Turku, Finland\\
\small\tt iatorm@utu.fi}

\date{\small Mathematics Subject Classifications: 68R15, 05D10}

\begin{document}

\maketitle

\begin{abstract}
In this short article, we study factor colorings of aperiodic linearly recurrent infinite words.
We show that there always exists a coloring which does not admit a monochromatic factorization of the word into factors of increasing lengths.
\end{abstract}

\bigskip\noindent \textbf{Keywords:}
infinite word; linearly recurrent word; factor coloring; Ramsey theory

\section{Introduction}

Consider an infinite word $x$ over a finite alphabet $A$, and let $c$ be a function from the finite factors of $x$ to some finite set of colors.
We can view it as a coloring of $2$-subsets of the natural numbers, where the color of $\{i, j\}$ for $i < j$ is the color of the factor $x_{[i, j-1]}$.
The infinite version of Ramsey's theorem states that there is an infinite $c$-monochromatic subset of $\N$.
If we enumerate this subset as $\{i_0, i_1, i_2, \ldots\}$, this means that $x$ has a factorization $x = x_{[0, i_0-1]} x_{[i_0, i_1-1]} x_{[i_1, i_2-1]} \cdots$ where every factor except the first has the same color.

Now, a natural question arises: can we get rid of the offending first factor?
That is, when can we guarantee that a complete monochromatic factorization exists?
This question was raised in \cite{Za10}, and the first general treatment was given in \cite{LuPrZa14}, where it was proved for several important classes of infinite words that monochromatic factorizations need not exist.
In fact, the authors conjectured that all aperiodic words have this property.

One well known class of words not considered in the study is that of fixed points of primitive substitutions.
We consider a strictly larger class of words, the \emph{linearly recurrent} ones, which have been studied in \cite{DuHoSk99,Du00}, among others.
In this short article, we show that every aperiodic linearly recurrent word admits a factor coloring with no monochromatic \emph{monotone} factorization, that is, one where the lengths of the factors are increasing.
Note that the almost monochromatic factorization given by Ramsey's theorem can be made monotone, since any two adjacent factors (except the first) can be merged together to produce a longer factor with the same color.

\section{Definitions and Preliminary Results}

In this article, the set of integers includes zero: $\N = \{0, 1, 2, \ldots\}$.
The indexing of finite and infinite words also starts at $0$.
We denote by $A^\omega$ the right-infinite words over an alphabet $A$, while $A^*$, $A^+$, $A^n$ and $A^{\leq n}$ denote words of any finite length, length at least $1$, length exactly $n$ and length at most $n$, respectively.

Let $x \in A^* \cup A^\omega$ be a finite or right-infinite word, and let $\Fact x, \Pref x \subset A^+$ be the sets of nonempty factors and nonempty prefixes of $x$, respectively.
A \emph{factor coloring of $x$} is a function $c : \Fact x \to C$, where $C$ is a finite set of \emph{colors}.
A \emph{factorization} of $x$ is a (finite if $x \in A^*$) sequence $(U_i)_i$ of words in $\Fact x$ such that $x = \prod_i U_i$, where the product denotes concatenation, and it is \emph{monotone} if $|U_i| \leq |U_j|$ for all $i \leq j$.
Another factorization $(V_j)_j$ is \emph{coarser} than $(U_i)_i$ if for all $j$ there exists $i$ such that $|V_0 \cdots V_j| = |U_0 \cdots U_i|$.
The factorization $(U_i)_i$ is \emph{$c$-monochromatic} if $c(U_i) = c(U_j)$ holds for all $i, j$, and \emph{strongly $c$-monochromatic} if all factorizations coarser than it are also $c$-monochromatic.

\begin{definition}
For $h, k \geq 1$, let $\PP(h,k)$ be the class of infinite words $x \in A^\omega$ such that there exists a factor coloring $c : \Fact x \to C$ with $|C| \leq k$ such that no prefix of $x$ has a $c$-monochromatic factorization of length $h$.
We also define the classes $\PPm(h,k)$, which only considers monotone factorizations, and $\PPs(h,k)$, which only considers strongly $c$-monochromatic factorizations.

We denote $\PP(k) = \PP(\infty,k)$ and $\PP = \bigcup_{k \geq 1} \PP(k)$, and analogously for the other classes.
\end{definition}

Observe that we trivially have $\PP(h,k) \subset \PPm(h,k) \subset \PPs(h,k)$ for all $h,k \geq 1$.
Furthermore, $\PP(h, k) \subset \PP(h', k')$ holds whenever $h \leq h'$ and $k \leq k'$, and similarly for the other two classes.
Of course, the degenerate cases $h = 1$ and $k = 1$ give rise to empty classes.

\begin{example}
Let $x \in A^\omega$ be any infinite word and $u \in A^+$ any nonempty finite word.
We show that $u^h x \notin \PPm(h,k)$ holds for all $h, k \geq 1$.
Namely, if $c : \Fact u^h x \to C$ is any factor coloring of $u^h x$, then $(u, u, \ldots, u)$ (with $h$ copies of $u$) is a monotone monochromatic factorization of length $h$ of a prefix of $x$.

An analogous but somewhat weaker result holds for the classes $\PPs(h,k)$.
Let $H \in \N$ be such that every $k$-colored complete graph of size larger than $H$ contains a monochromatic induced subgraph of size $h + 1$; its existence follows from the finite version of Ramsey's theorem.
We claim that $y = u^H x \notin \PPs(h,k)$, and for that, let $c : \Fact y \to C$ be a factor coloring with $|C| \leq k$.
Let $G$ be the complete graph whose vertices are the integers $V = \{0, |u|, \ldots, H |u|\}$, and let $c'$ be the edge coloring of $G$ given by $c'(\{i,j\}) = c(y_{[i, j - 1]})$ for all $i < j \in V$.
By the definition of $H$, there exists a monochromatic subset $W \subset V$ of size $h+1$; denote $W = \{n_0, n_1, \ldots, n_h\}$ with $n_i < n_{i+1}$.
Since $c'(\{i,j\}) = c'(\{i-|u|,j-|u|\})$ holds for all applicable $i$ and $j$, we may assume $n_0 = 0$.
But then $(y_{[n_i, n_{i+1}-1]})_{0 \leq i < h}$ is a size-$h$ strongly monochromatic factorization of a prefix of $y$.
\end{example}

We remark that we have not tried to obtain any bounds for the number $H$ in the above example, except those given by Ramsey's theorem.
Since the edge coloring $c'$ of $G$ is of a special form (all edges of the same length have the same color), a more specialized approach would probably yield better bounds.

The following result, which was already mentioned in the introduction, is the motivation behind the research direction started in \cite{LuPrZa14}.
It was not stated for strongly $c$-monochromatic partitions, but the proof readily gives this stronger result.

\begin{proposition}[Proposition 3.1 in \cite{LuPrZa14}]
Let $c : \Fact x \to C$ be a factor coloring of an infinite word $x \in A^\omega$.
Then there exists $i \in \N$ such that the infinite tail $x_{[i, \infty)} \in A^\omega$ has a strongly $c$-monochromatic factorization.
\end{proposition}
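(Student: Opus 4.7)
The plan is to apply the infinite Ramsey theorem directly, in essentially the same way as for ordinary monochromatic factorizations, and then verify that the resulting factorization is automatically strongly monochromatic because the Ramsey property already encodes all sub-interval colorings.

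First, I would recast $c$ as an edge-coloring of the complete graph on $\N$: give the edge $\{i,j\}$ with $i < j$ the color $c(x_{[i,j-1]}) \in C$. Since $|C|$ is finite, the infinite Ramsey theorem produces an infinite monochromatic vertex set
\[
I = \{i_0 < i_1 < i_2 < \cdots\} \subseteq \N,
\]
meaning that there is a single color $\alpha \in C$ with $c(x_{[i_a, i_b-1]}) = \alpha$ for every $a < b$.

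Next, I would set $i := i_0$ and consider the factorization $(U_j)_{j \in \N}$ of $x_{[i,\infty)}$ defined by $U_j = x_{[i_j, i_{j+1}-1]}$. Any factorization $(V_k)_k$ coarser than this one has the property that each $V_k$ is a concatenation $U_{j_k} U_{j_k+1} \cdots U_{j_{k+1}-1}$ of consecutive blocks of $(U_j)$, hence $V_k = x_{[i_{j_k}, i_{j_{k+1}}-1]}$ for some $j_k < j_{k+1}$. By the Ramsey choice of $I$, every such word has color $\alpha$, so $(V_k)_k$ is $c$-monochromatic. This shows that $(U_j)_j$ itself is strongly $c$-monochromatic, and $x_{[i,\infty)}$ is the word being factored, completing the proof.

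There is no real obstacle here: the only thing to notice beyond the usual Ramsey argument is that the statement "every coarser factorization is monochromatic" is literally the statement that every pair $\{i_a, i_b\}$ with $a < b$ receives color $\alpha$, which is exactly what infinite Ramsey gives us. In particular, no additional work is needed to upgrade the conclusion from monochromatic to strongly monochromatic, which is why, as the authors note, the proof of the weaker version given in \cite{LuPrZa14} already yields the stronger result verbatim.
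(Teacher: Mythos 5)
Your proof is correct and follows the same route as the paper: both apply the infinite Ramsey theorem to the induced edge coloring $c'(\{i,j\}) = c(x_{[i,j-1]})$, take $i = i_0$ for the resulting monochromatic set $\{i_0 < i_1 < \cdots\}$, and observe that coarser factorizations correspond to pairs $\{i_a, i_b\}$, so strong monochromaticity comes for free. No discrepancies.
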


\begin{proof}
Let $c'$ be a function from the $2$-subsets of $\N$ to the set $C$ defined by $c'(\{i, j\}) = c(x_{[i, j-1]})$ for all $i < j$.
By the infinite version of Ramsey's theorem, there exists an infinite $c'$-monochromatic set $S \subset \N$.
Denoting $S = \{i_0, i_1, i_2, \ldots\}$ with $i_j < i_{j+1}$, this implies that every factor $x_{[i_j, i_{j+k}-1]}$ has the same $c$-image for all $j \in \N$ and $k \geq 1$.
But then the infinite tail $x_{[i_0, \infty)}$ has the factorization $(x_{[i_j, i_{j+1}-1]})_{j \in \N}$, which is strongly $c$-monochromatic, and we choose $i = i_0$.
\end{proof}

In \cite{LuPrZa14}, it was proved (among other results) that all words that are not uniformly recurrent are in $\PP$, all words with full factor complexity ($\Fact x = A^+$) are in $\PP(2)$, and all Sturmian words are in $\PP(3)$.
Recall that Sturmian words are exactly the infinite words over the alphabet $\{0, 1\}$ that are \emph{balanced} (the number of $1$s in any two factors of the same length differs by at most $1$).
The authors conjectured that all aperiodic infinite words are in the class $\PP$, and by the results stated above, all still unsolved cases are uniformly recurrent.

The parameter $h$ and the classes $\PPm$ and $\PPs$ were not considered in \cite{LuPrZa14}, and some of their basic properties are still unknown.
For example, it is clear that $\bigcup_{h \geq 1} \PP(h,k) \subset \PP(\infty, k)$ holds for all $k \geq 1$, and analogously for the other classes, but we do not know whether the inclusion is strict (except in the degenerate case $k = 1$).

Let $x \in A^\omega$ be an infinite word.
For $u \in \Fact x$, a \emph{return to $u$ (in $x$)} is a word $w \in A^+$ such that $w u \in \Fact x$ and $u$ occurs exactly twice in $w u$.
The set of returns to $u$ in $x$ is denoted $\Ret_x(u)$.
We say that $x$ is \emph{uniformly recurrent}, if for every factor $u \in \Fact x$, the set $\Ret_x(u)$ is nonempty and finite.
Equivalently, every factor of $x$ occurs infinitely many times and with bounded gaps.
If $x$ is uniformly recurrent, then for each nonempty prefix $u$ of $x$, we have a unique factorization $x = r_0 r_1 r_2 \cdots$ where $r_i \in \Ret_x(u)$ for all $i \in \N$.
We fix some enumeration of $\Ret_x(u)$, which defines a bijection $\Theta_x^u : R_x(u) \to \Ret_x(u)$, where $R_x(u) = \{0, 1, \ldots, |\Ret_x(u)|-1\}$.
We extend $\Theta_x^u$ into a morphism from $R_x(u)^*$ to $A^*$ by $\Theta_x^u(n_0 n_1 \cdots n_{k-1}) = \Theta_x^u(n_0) \Theta_x^u(n_1) \cdots \Theta_x^u(n_{k-1})$.
The following lemma can be found in \cite{Du98}.

\begin{lemma}[Proposition 2.6 in \cite{Du98}]
\label{lem:Return}
Let $x \in A^\omega$ be a uniformly recurrent word, and let $u, v \in \Pref x$ with $|v| \leq |u|$.
\begin{enumerate}
\item The morphism $\Theta_x^u : R_x(u)^* \to A^*$ is injective.
\item We have $\Ret_x(u) \subset \Theta_x^v(R_x(v)^*)$.
\item There is a unique morphism $\lambda_v^u : R_x(u)^* \to R_x(v)^*$ such that $\Theta_x^v \circ \lambda_v^u = \Theta_x^u$.
\end{enumerate}
\end{lemma}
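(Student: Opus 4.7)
The plan is to prove the three parts in order, with the main effort going into (1); parts (2) and (3) will then follow quickly. The heart of (1) is the following claim: for any $r_0, r_1, \ldots, r_{k-1} \in \Ret_x(u)$, the word $W = r_0 r_1 \cdots r_{k-1} u$ contains $u$ exactly at the block boundaries $|r_0 \cdots r_{i-1}|$ for $i = 0, 1, \ldots, k$. Once this is in hand, injectivity of $\Theta_x^u$ is immediate: if $\Theta_x^u(s) = \Theta_x^u(t)$, then the corresponding $W$ has its decomposition into return blocks uniquely determined by the positions of $u$, forcing $s = t$ because $\Theta_x^u$ is a bijection on letters.

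The claim itself goes by induction on $k$, but it relies on an auxiliary observation, also proved by induction on $k$: the word $W$ always begins with $u$. For the auxiliary induction, the case $|r_0| \geq |u|$ is immediate, because the return property of $r_0$ forces $r_0$ itself to begin with $u$; when $|r_0| < |u|$, the same property forces $r_0$ to be a prefix of $u$ and $u$ to have period $|r_0|$, and the induction hypothesis applied to $r_1 \cdots r_{k-1} u$ supplies exactly the suffix of $u$ that is needed. Granted the auxiliary observation, the main induction handles the claim as follows: an occurrence of $u$ at position $p \geq |r_0|$ descends to an occurrence in $r_1 \cdots r_{k-1} u$ and is handled by the induction hypothesis, while an occurrence at $p < |r_0|$ is in fact an occurrence of $u$ inside $r_0 u$, because the auxiliary observation applied to $r_1, \ldots, r_{k-1}$ tells us that the first $|r_0| + |u|$ letters of $W$ are exactly $r_0 u$; the return property of $r_0$ then forces $p = 0$.

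For (2), take $r \in \Ret_x(u)$. Since $u$ and $v$ are prefixes of $x$ with $|v| \leq |u|$, $v$ is a prefix of $u$, so $rv$ is a prefix of $ru \in \Fact x$ and hence itself a factor of $x$. Enumerate the positions where $v$ occurs in $rv$ as $0 = p_0 < p_1 < \cdots < p_m = |r|$ and set $s_i = (rv)_{[p_i, p_{i+1}-1]}$. Then $s_i v = (rv)_{[p_i, p_{i+1} + |v| - 1]}$ is a factor of $x$ in which $v$ occurs exactly at positions $0$ and $p_{i+1} - p_i$, because consecutive occurrences of $v$ in $rv$ give the only positions of $v$ within this window. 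Hence each $s_i \in \Ret_x(v)$, and $r = s_0 s_1 \cdots s_{m-1}$ lies in $\Theta_x^v(R_x(v)^*)$.

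Part (3) is then a formality: each letter $n \in R_x(u)$ satisfies $\Theta_x^u(n) \in \Ret_x(u) \subset \Theta_x^v(R_x(v)^*)$ by (2), and (1) applied to $v$ provides a unique word $\lambda_v^u(n) \in R_x(v)^*$ mapping under $\Theta_x^v$ to $\Theta_x^u(n)$; extending letter-by-letter yields the desired morphism, with uniqueness again forced by the injectivity of $\Theta_x^v$. The main obstacle is the auxiliary observation in (1), specifically the case $|r_0| < |u|$, where one must exploit the period structure of $u$ implied by the return property; all other steps are direct unwindings of the definitions.
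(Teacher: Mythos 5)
Your proof is correct. Note that the paper itself does not prove this lemma; it imports it from Durand's 1998 paper, so there is no in-text argument to compare against. Your reconstruction follows the standard route, and the essential points are all in place: the combinatorial claim that in $W = r_0 r_1 \cdots r_{k-1} u$ the occurrences of $u$ are exactly the block boundaries is precisely what makes $\Theta_x^u$ injective on the \emph{free} monoid $R_x(u)^*$ (not merely on words coding actual factors of $x$), and your auxiliary observation that $W$ begins with $u$ is exactly the ingredient needed to reduce an occurrence at position $p < |r_0|$ to an occurrence inside $r_0 u$, where the return property forces $p = 0$. The splitting argument for part (2), cutting $rv$ at consecutive occurrences of $v$, and the letterwise definition of $\lambda_v^u$ in part (3) with uniqueness from injectivity of $\Theta_x^v$, are likewise the standard and correct arguments. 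One small caveat: you tacitly use that $u$ occurs at position $0$ of $wu$ for every $w \in \Ret_x(u)$, which the paper's literal definition (``$u$ occurs exactly twice in $wu$'') does not state explicitly; this is the intended reading (returns arise from consecutive occurrences of the prefix $u$ in $x$, so $u$ is a prefix of $wu$), and without it the unique factorization $x = r_0 r_1 r_2 \cdots$ invoked by the paper would not make sense either, so this is an imprecision of the paper's definition rather than a gap in your argument.
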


A word $x \in A^\omega$ is \emph{linearly recurrent (with constant $K$)} if for all $u \in \Fact x$ and all $w \in \Ret_x(u)$ we have $|w| \leq K|u|$.
It is \emph{periodic} if $x = x_{[n, \infty)}$ for some $n \geq 1$, and \emph{aperiodic} otherwise.
We need the following facts about linearly recurrent words, which can be found in \cite{DuHoSk99}.

\begin{lemma}[Theorem 24 in \cite{DuHoSk99}]
\label{lem:LinRec}
Let $x \in A^\omega$ be an aperiodic linearly recurrent word with constant $K$.
\begin{enumerate}
\item For all $u \in A^+$ we have $u^{K+1} \notin \Fact x$.
\item For all $u \in \Fact x$ and $w \in \Ret_x(u)$ we have $K |w| > |u| > |w|/K$
\item For all $u \in \Fact x$ we have $|\Ret_x(u)| \leq K(K+1)^2$.
\end{enumerate}
\end{lemma}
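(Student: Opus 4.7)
The plan is to establish the three parts in order, each using the previous. For part (1), I would argue by contradiction: assume $u^{K+1}\in\Fact x$ and, replacing $u$ by its primitive root if necessary, take $u$ primitive. Then within the block $u^{K+1}$ the factor $u$ occurs exactly at positions $0,|u|,\ldots,K|u|$, so $u^m$ is a return to $u^j$ whenever $j+m\le K+1$. The idea is to show that this rigid periodic structure propagates, forcing $u^n\in\Fact x$ for every $n$, by iterating through longer and longer prefixes $v$ of $x$ via the return-word morphisms of Lemma~\ref{lem:Return}. An accumulation point of the shifted orbit of $x$ then has $u^\omega$ as a tail, and by minimality of the orbit closure of a uniformly recurrent word this forces $x$ to be eventually periodic, contradicting aperiodicity.

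For part (2), the easy bound $|u|\ge|w|/K$ is literally the definition of linear recurrence, and strictness in the aperiodic setting follows from part (1) by ruling out the extremal configuration. For the more substantive inequality $K|w|>|u|$ I would use the classical short-period argument: since $w$ is a return to $u$, the word $wu$ has $u$ as both a prefix and a suffix, so whenever $|w|\le|u|$ the two copies overlap and force $u$ to have period $|w|$. Writing $p$ for the length-$|w|$ prefix of $u$, we get $u=p^k p'$ with $k=\lfloor|u|/|w|\rfloor$. If $|u|\ge K|w|$ then $k\ge K$, and matching positions block by block in $wu$ directly shows $p^{K+1}\in\Fact x$, contradicting part~(1). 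Hence $|u|<K|w|$.

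For part (3) I would combine (2) with a structural counting argument. By (2), every return to $u$ has length strictly between $|u|/K$ and $K|u|$, so the word $wu$ has length less than $(K+1)|u|$ and distinct returns yield distinct such factors. The subtlety is that a crude factor-complexity estimate only yields a bound growing with $|u|$, whereas the target is a constant. I would use Lemma~\ref{lem:Return}(2) to decompose every return to $u$ as a concatenation of return words to a carefully chosen short prefix $v$ of $u$; the length constraints from (2) then bound the number of $v$-blocks appearing in any $u$-return, while the number of distinct $v$-returns is itself controlled by applying (2) one level down. Careful bookkeeping of the ratios yields the stated bound $K(K+1)^2$. This last step---collapsing the factor-complexity estimate to a constant via the hierarchical return-word structure---is the main obstacle I anticipate.
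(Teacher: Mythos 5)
The paper does not prove this lemma at all: it is imported verbatim as Theorem~24 of \cite{DuHoSk99}, so I am comparing your sketch against the argument given there.

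The serious gap is in part (1). Your plan is to propagate the power $u^{K+1}$ to $u^n$ for all $n$ ``via the return-word morphisms,'' but no mechanism is given, and I do not see one. The only return word to $u^j$ visible inside $u^{K+1}$ is $u$ itself; your claim that $u^m$ is a return to $u^j$ whenever $j+m\le K+1$ is false for $m\ge 2$, since $u^j$ then occurs $m+1>2$ times in $u^mu^j$. Knowing $u\in\Ret_x(u^j)$ does not produce an occurrence of $u^{K+2}$; the natural way to forbid such a short return word is part (2), but you derive part (2) from part (1), so that route is circular. The proof in \cite{DuHoSk99} is entirely different: linear recurrence with constant $K$ implies that a (suitably placed) factor of $x$ of length $(K+1)n$ contains every factor of $x$ of length $n$, because consecutive occurrences of a length-$n$ factor are at most $Kn$ apart. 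If $u^{K+1}\in\Fact x$ with $|u|=n$ and $u$ primitive, the length-$n$ factors of $u^{K+1}$ are the at most $n$ rotations of $u$, so $x$ has at most $n$ factors of length $n$, and the Morse--Hedlund theorem forces $x$ to be eventually periodic, hence (by uniform recurrence) periodic --- a contradiction. Your part (2) --- the two occurrences of $u$ in $wu$ overlap when $|w|\le|u|/K$, forcing period $|w|$ and hence a $(K+1)$-st power of the length-$|w|$ prefix inside $wu$ --- is exactly the standard argument and is correct, modulo its dependence on (1). (The strictness of $|u|>|w|/K$ does not follow from (1) as you suggest, since a \emph{long} return word creates no power, but this is immaterial.)

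Part (3) you explicitly leave open, and the hierarchical decomposition into returns of a shorter prefix is not how it is done; the count is direct and needs only part (2) and the same window property as above. For every $w\in\Ret_x(u)$ the word $wu$ has length at most $(K+1)|u|$, so every such $wu$ occurs inside a single suitably placed factor $W$ of $x$ of length $(K+1)^2|u|$, and such an occurrence exhibits $w$ as the gap between two consecutive occurrences of $u$ inside $W$. By part (2), consecutive occurrences of $u$ are more than $|u|/K$ apart, so $W$ contains at most $(K+1)^2|u|\,/\,(|u|/K)=K(K+1)^2$ occurrences of $u$, hence at most that many distinct return words. In summary: part (2) is fine, but parts (1) and (3) both need repair, and the single missing tool in both is the observation that linear recurrence makes every window of length $(K+1)\ell$ capture all factors of length $\ell$.
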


Important classes of linearly recurrent words include certain \emph{Sturmian words}, and all aperiodic \emph{fixed points of primitive substitutions} \cite{DuHoSk99}.
Namely, a Sturmian word is linearly recurrent if and only if the continued fraction expansion of its asymptotic density of $1$s has bounded coefficients (see \cite{Ca01} for details).
Recall that a substitution $\sigma : A^* \to A^*$ is primitive if there exists $n \in \N$ such that all $a \in A$ occur in $\sigma^n(b)$ for any $b \in A$.

\section{Main Results}

In this section, we present our main result.

\begin{theorem}
If $x \in A^\omega$ is linearly recurrent with constant $K$ and aperiodic, then $x \in \PPm$.
More specifically, we have $x \in \PPm(K+1,k)$ for $k = 2 + \sum_{i=0}^{K^5-1} 2 K^i(K+1)^{2 i}$.
\end{theorem}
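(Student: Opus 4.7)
The plan is to build an explicit coloring via a hierarchy of nested prefixes of $x$ together with the iterated return word machinery from Lemma~\ref{lem:Return}, and then to show that monochromaticity across $K+1$ monotone factors cascades into a forbidden $(K+1)$-th power inside $x$.

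Concretely, I would start by fixing a nested sequence of prefixes $v_0, v_1, \ldots, v_{N-1}$ of $x$ with $v_i \in \Pref v_{i+1}$ and $N = K^5$, chosen so that each $v_{i+1}$ is substantially longer than $v_i$ (say $|v_{i+1}| \geq (K+1)|v_i|$) and so that the morphisms $\lambda_{v_i}^{v_{i+1}}$ compose neatly. Every factor $f$ of $x$ is then placed into one of $N+2$ strata: two reserved ``edge'' colors for lengths below $|v_0|$ and above a threshold past $|v_{N-1}|$ (giving the $2+$ in the count), and one stratum per level $i \in \{0, \ldots, N-1\}$ selected by where $|f|$ sits relative to $|v_i|$ and $|v_{i+1}|$. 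Within level $i$, linear recurrence guarantees that $f$ contains a full occurrence of $v_i$ and so admits a decomposition $f = p \cdot \Theta_x^{v_i}(w) \cdot v_i \cdot s$ with boundary pieces $p,s$ of length at most $K|v_i|$. The coloring records a bounded ``fingerprint'' of this decomposition: at each finer level $j \leq i$, one picks out the boundary return words and one bit of alignment, so the information carried at level $i$ is a word of length $i$ over an alphabet of size at most $K(K+1)^2$ (using the bound on $|\Ret_x(u)|$ from Lemma~\ref{lem:LinRec}) plus a binary choice, giving at most $2 K^i(K+1)^{2i}$ colors per level.

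To show no prefix admits a $c$-monochromatic monotone factorization of length $K+1$, I would assume $P = U_0 U_1 \cdots U_K$ is such a factorization and derive a contradiction. Monochromaticity places all $U_j$ at a common level $i$ with a common fingerprint through every finer level $j \leq i$. Combined with monotonicity $|U_0| \leq \cdots \leq |U_K|$, the fingerprint pins down the left and right sides of each $U_j$ identically, so the only remaining freedom lies in a central block that, across consecutive $U_j$, must repeat along $P$. Peeling the structure off iteratively through the hierarchy drills down to a word $u$ for which $u^{K+1}$ occurs in $P$ and hence in $x$, contradicting Lemma~\ref{lem:LinRec}(1). The two edge cases are handled separately: the ``short'' color applies only to factors so short that $K+1$ of them cannot fit inside a prefix of $x$ without repeating a short word $K+1$ times, and the ``long'' color occurs only for factors so long that level $N-1$ already triggers the cascade inside each $U_j$.

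The main obstacle is calibrating the fingerprint to be informative enough to force the cascade yet lean enough to respect the bound $2 K^i(K+1)^{2i}$, while simultaneously choosing $N = K^5$ large enough to absorb the slack introduced at each level by boundary alignments, residual lengths, and partial occurrences of $v_i$. The exponent $5$ presumably emerges from accumulating a bounded number of these slack factors per level (each roughly of size $K$), so that by depth $K^5$ a length-$(K+1)$ monotone factorization of $P$ is trapped in a single stratum and cannot escape the periodicity forced by monochromaticity. The bookkeeping of boundary-versus-interior decompositions across $K+1$ factors and $N$ levels is the technically most delicate step.
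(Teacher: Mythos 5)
There is a genuine gap, and it lies in the architecture of your coloring rather than in the bookkeeping you flag as delicate. You propose finitely many levels $N = K^5$ tied to a fixed finite tower of prefixes $v_0, \ldots, v_{N-1}$, with a single absorbing ``long'' color for every factor above a threshold past $|v_{N-1}|$. Since $x$ is infinite, a prefix of $x$ of sufficient length can be cut into $K+1$ monotone pieces each longer than that threshold; all of them receive the one ``long'' color, so you have produced exactly the monochromatic monotone factorization you are trying to forbid, and no ``cascade'' is available because the long color records no structure at all. The ``short'' edge color has the symmetric problem: $K+1$ monotone factors all below $|v_0|$ need not contain any repetition (e.g.\ distinct single letters), so unless that color is vacuous the same failure occurs. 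Any correct coloring must carry nontrivial information at \emph{every} length scale, and the real difficulty of the theorem is doing that with finitely many colors.

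The paper resolves this with a device absent from your proposal: it uses the infinite family of prefixes $p_n = x_{[0,K^n-1]}$ for all $n$, and for a prefix $u$ at scale $n(u)$ (meaning $K^{n(u)} \le |u| < K^{n(u)+1}$) it records only the \emph{parity} $n(u) \bmod 2$ together with the word $r(u)$ encoding $u$ as a product of return words to $p_{n(u)-1}$. Linear recurrence bounds $|r(u)| < K^5$ and the return alphabet by $K(K+1)^2$, which is where your sum $\sum_{i=0}^{K^5-1} 2K^i(K+1)^{2i}$ actually comes from --- the exponent $K^5$ bounds the length of a single factor's return-word encoding ($K^2$ returns to $p_n$, each a product of fewer than $K^3$ returns to $p_{n-1}$), not a number of hierarchy levels. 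Non-prefixes get a junk color $\#$, so monochromaticity forces every $U_i$ to be a prefix; if all $U_i$ share the same scale, injectivity of $\Theta_x^{p_{n-1}}$ makes them literally equal and $U_0^{K+1} \in \Fact x$; if not, the parity bit forces a scale jump of at least $2$, hence $|U_i| \ge K|U_0|$, and since $U_i$ and $U_0^i$ are comparable prefixes one gets $U_i \in \Pref U_0^\omega$ and again $U_0^{K+1} \in \Fact x$, contradicting Lemma~\ref{lem:LinRec}. Your ``fingerprint'' idea gestures toward the return-word encoding, but without the parity-of-scale trick (or some substitute handling unboundedly many scales with finitely many colors) the approach cannot close.
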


\begin{proof}
Denote $p_n = x_{[0, K^n-1]}$, the prefix of $x$ of length $K^n$, and let $B = \{0, \ldots, K(K+1)^2-1\}$.
We use $B$ as a common index set for the return words of every prefix $p_n$, since it contains $R_x(p_n)$ for all $n \in \N$.
Consider a word $u \in \Fact x$.
If $u \in \Ret_x(p_n)^m$ for some $m \in \N$, then Lemma~\ref{lem:Return} implies that there exists a unique word $r \in B^+$ such that $u = \Theta_x^{p_{n-1}}(r)$.
We claim that $|r| < m K^3$.
This is because $|w| > |p_{n-1}|/K = K^{n-2}$ holds for every $w \in \Ret_x(p_{n-1})$ by Lemma~\ref{lem:LinRec}, so that we have $\Theta_x^{p_{n-1}}(r) > m K^{n-2} |r|$, while $|u| \leq m K |p_n| = m K^{n+1}$.
We denote this $r$ by $r(u)$.
Finally, denote by $n(u)$ the unique number $n \in \N$ such that $K^n \leq |u| < K^{n+1}$.

Using the notions defined in the above paragraph, we define the following factor coloring $c : \Fact x \to C$ of $x$, where $C = \{\#, \$\} \cup (\Z_2 \times B^{< K^5})$.
\[
c(u) = \left\{ \begin{array}{ll}
	(n(u) \bmod 2, r(u)), & \mathrm{if~} u \in \Ret_x(p_{n(u)})^{\leq K^2}, \\
	\$, & \mathrm{if~} u \notin \Ret_x(p_{n(u)})^{\leq K^2} \mathrm{~and~} u \in \Pref x, \\
	\#, & \mathrm{otherwise.}
\end{array} \right.
\]
Note that $|C| = 2 + \sum_{i=0}^{K^5-1} 2 K^i(K+1)^{2 i}$.

We claim that no prefix of $x$ admits a monotone and $c$-monochromatic factorization of length $K + 1$.
Suppose on the contrary that such a factorization exists: $U_0 U_1 \cdots U_K \in \Pref x$ where $0 < |U_i| \leq |U_j|$ for all $i < j$, and every $U_i$ has the same $c$-image $\sigma \in C$.
Since $U_0 \in \Pref x$, we have $\sigma \neq \#$.
In particular, every $U_i$ is a prefix of $x$.
Let $n = n(U_0)$.
Since $U_1 \in \Pref x$ and $|U_1| \geq |U_0|$, the word $p_n$ is a prefix of $U_1$, and thus $U_0 \in \Ret_x(p_n)^m$ for some $m \in \N$.
We have $|U_0| < K^{n+1}$ by the definition of $n$, and Lemma~\ref{lem:LinRec} implies $|w| \geq K^{n-1}$ for all $w \in \Ret_x(p_n)$, so that $m \leq K^2$.
By the definition of $c$, we then have $\sigma = c(U_0) = (n \bmod 2, r(U_0))$.

The idea of the rest of the proof is the following.
If every $U_i$ has the same $n$-image, then they are equal by the injectivity of $\Theta_x(p_{n-1})$, so that $x$ has a $(K+1)$-fold repetition as a factor.
If this is not the case, then the $n$-image of one of the words is at least $2$ larger than that of $U_0$, so that it is at least $K$ times longer than $U_0$.
Since that word is also a prefix of $x$, it contains a $(K+1)$-fold repetition of $U_0$.
In both cases, we obtain a contradiction with Lemma~\ref{lem:LinRec}.

We proceed with the proof.
Suppose first that $n(U_0) = n(U_i)$ for all $i \in \{1, 2, \ldots, K\}$.
In this case, we have
\begin{equation}
\label{eq:Same}
U_i = \Theta_x^{p_{n-1}}(r(U_i)) = \Theta_x^{p_{n-1}}(r(U_0)) = U_0
\end{equation}
for all $i$, since $r(U_i) = r(U_0)$, so that the words of the factorization are in fact equal.
But then we have $U_0 U_1 \cdots U_K = U_0^{K+1} \in \Fact x$, contradicting Lemma~\ref{lem:LinRec}.

Suppose then that there exists $1 \leq i \leq K$ such that $n(U_i) > n(U_0)$, and choose the minimal such $i$.
By equation~\eqref{eq:Same}, we again have $U_0 U_1 \cdots U_{i-1} = U_0^i$.
Since $n(U_i) \equiv n(U_0) \bmod 2$, we actually have $n(U_i) \geq n(U_0) + 2$, which implies $|U_i| \geq K|U_0|$.
But since $U_i$ is a prefix of $x$, the word $U_0^i$ is a prefix of $U_i$.
Then the word $U_0 U_1 \cdots U_{i-1} U_0^i = U_0^{2 i}$ is a prefix of $x$, and the longer one of $U_0^{2 i}$ and $U_i$ is a prefix of the other.
Iterating this argument, we see that $U_i$ is a prefix of the infinite periodic word $U_0^\omega$, and since $|U_i| \geq K|U_0|$, the word $U_0^K$ is a prefix of $U_i$.
Since we also have $U_0^i U_i \in \Pref x$ and $i \geq 1$, this implies $U_0^{K+1} \in \Fact x$, again a contradiction.
\end{proof}

Since every fixed point of a primitive substitution is linearly recurrent, we have the following corollary.

\begin{corollary}
If $x \in A^\omega$ is the fixed point of an aperiodic primitive substitution, then $x \in \PPm$.
\end{corollary}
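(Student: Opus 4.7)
The plan is essentially an immediate appeal to the theorem just proved, since the corollary weakens the hypothesis from ``linearly recurrent'' to ``fixed point of a primitive substitution,'' and the latter property implies the former in the aperiodic case.

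Concretely, I would first recall the fact noted just after Lemma~\ref{lem:LinRec} and attributed to \cite{DuHoSk99}: every aperiodic fixed point of a primitive substitution is linearly recurrent, i.e.\ there exists a constant $K \geq 1$ such that $|w| \leq K|u|$ for every $u \in \Fact x$ and every $w \in \Ret_x(u)$. Since the hypothesis of the corollary already supplies aperiodicity and primitivity of the underlying substitution, this gives us some constant $K$ for which $x$ is linearly recurrent with constant $K$.

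With linear recurrence and aperiodicity in hand, the main theorem applies directly to $x$, yielding $x \in \PPm(K+1, k)$ for $k = 2 + \sum_{i=0}^{K^5-1} 2 K^i (K+1)^{2i}$, and in particular $x \in \PPm$. I would not attempt to optimize $k$ as a function of the substitution's data, since the theorem already provides an explicit (if rough) bound in terms of $K$.

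There is no real obstacle here; the work was done in proving the theorem and in establishing linear recurrence of primitive substitutive words in \cite{DuHoSk99}. The only minor choice is stylistic: whether to state the corollary as a simple consequence (which is what I would do) or to reprove anything. Given the structure of the paper, a two-line proof that cites the theorem and the linear recurrence result is the cleanest presentation.
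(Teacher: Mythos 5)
Your proposal is correct and matches the paper's argument exactly: the corollary is an immediate consequence of the main theorem together with the fact from \cite{DuHoSk99} that every (aperiodic) fixed point of a primitive substitution is linearly recurrent. Nothing further is needed.
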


\bibliographystyle{plain}
\bibliography{../../../bib/bib}{}

\def\ocirc#1{\ifmmode\setbox0=\hbox{$#1$}\dimen0=\ht0 \advance\dimen0
  by1pt\rlap{\hbox to\wd0{\hss\raise\dimen0
  \hbox{\hskip.2em$\scriptscriptstyle\circ$}\hss}}#1\else {\accent"17 #1}\fi}
\begin{thebibliography}{1}

\bibitem{Ca01}
Julien Cassaigne.
\newblock Recurrence in infinite words.
\newblock In Afonso Ferreira and Horst Reichel, editors, {\em STACS 2001},
  volume 2010 of {\em Lecture Notes in Computer Science}, pages 1--11. Springer
  Berlin Heidelberg, 2001.

\bibitem{LuPrZa14}
Aldo de~Luca, Elena~V. Pribavkina, and Luca~Q. Zamboni.
\newblock A coloring problem for infinite words.
\newblock {\em Journal of Combinatorial Theory, Series A}, 125(0):306 -- 332,
  2014.

\bibitem{DuHoSk99}
F.~Durand, B.~Host, and C.~Skau.
\newblock Substitutional dynamical systems, {B}ratteli diagrams and dimension
  groups.
\newblock {\em Ergodic Theory Dynam. Systems}, 19(4):953--993, 1999.

\bibitem{Du98}
Fabien Durand.
\newblock A characterization of substitutive sequences using return words.
\newblock {\em Discrete Math.}, 179(1-3):89--101, 1998.

\bibitem{Du00}
Fabien Durand.
\newblock Linearly recurrent subshifts have a finite number of non-periodic
  subshift factors.
\newblock {\em Ergodic Theory and Dynamical Systems}, 20:1061--1078, 7 2000.

\bibitem{Za10}
Luca~Q. Zamboni.
\newblock A note on coloring factors of words.
\newblock In {\em Mini-Workshop: Combinatorics on Words (August 22--28)},
  number~37 in Oberwolfach Reports, pages 2240--2242, 2010.

\end{thebibliography}

\end{document}